\newtheorem{theorem}{Theorem}[section]
\theoremstyle{definition}
\theoremstyle{remark}
\numberwithin{equation}{section}
\newcommand{\RR}{\mathbb{R}} 
\newcommand{\Ss}[1]{W^{k,p}(\mathbb{S}^1)}
\newcommand{\abs}[1]{\left\vert#1\right\vert}
\DeclareMathOperator{\cl}{cl}
\title{The pressure distribution in extreme Stokes waves}
\begin{document}
\author{Tony Lyons}
\address{Department of Computing and Mathematics\\ Waterford Institute of Technology\\ Waterford, Ireland}
\email{tlyons@wit.ie}
\keywords{Euler's Equation, Weak Solutions, Maximum Principles}
\subjclass[2010]{35Q31, 35Q35, 76B99}
\begin{abstract}
 In this paper we prove that the pressure beneath an extreme Stokes wave over finite depth is strictly increasing with depth. Additionally it is shown that the pressure decreases in moving between a crest-line and trough-line, while it is stationary with respect to the horizontal coordinate along these lines themselves.
\end{abstract}
\maketitle
\section{Introduction}\label{Sec1}
In the current paper we investigate the behaviour of the pressure profile induced by a Stokes wave of greatest height propagating in a fluid body of finite-depth. A Stokes waves of greatest height, also called an extreme Stokes waves, is a steady, periodic, two-dimensional flow, characterised by the appearance of a stagnation point on the free surface, specifically at the wave crest, which appear as cusps with an included angle of $120^{\circ}$ cf. \cite{Sto1880}. The symmetries and regularity of steady periodic waves have been well established under a variety of conditions, including gravity waves incorporating vorticity cf. \cite{Con2011, CE2004a, CE2004b, CE2011b} . However the presence of  stagnation points in the flow introduces a number of mathematical complications related to the regularity of the of the free surface \cite{ Con2012, Tol1996}, which require careful and subtle treatment when analysing the governing equations of the flows. The analytical techniques developed in this paper by means of an excision of the stagnation points allow us to deduce several key features of the pressure in the fluid flow. Specifically it is shown that the pressure decreases as we move horizontally between a crest-line and a trough-line, while along these lines the pressure is stationary with respect to the horizontal coordinate. Secondly it will be shown that the pressure strictly increases with depth at any point throughout the fluid domain.

Incompressibility and irrotationality of regular Stokes waves allows one to construct a conformal map of the fluid domain with analytic continuation to the boundary, the so called hodograph transform. Using the hodograph transform one may equivalently reformulate the free-boundary problem governing two-dimensional Stokes waves as a fixed-boundary problem on the conformal image of the fluid domain, wherein one may readily employ several powerful maximum principles. However the presence of a stagnation point in an extreme Stokes wave ensures that the hodograph transform does not have analytic continuation to the boundary; as such the fixed-boundary and free-boundary problems may no longer be identified on the boundary. Moreover, owing to the lack of regularity of the free surface, the boundary of the conformal domain lacks the necessary regularity to impose maximum principles. Nevertheless by excising the stagnation point from the conformal domain, we restore the necessary regularity of the boundary to impose maximum principles and also ensure the hodograph transform is conformal throughout. Then by means of a uniform limiting process cf. \cite{AFT1982} we may extend the application of maximum principles from the excised domain to the fluid domain. The uniform limiting process was used to great effect in the work \cite{AFT1982}, which proved the existence of extreme deep-water Stokes incorporating cusps with an included angle of 120 degrees. Moreover, the same technique was employed in the  work \cite{PT2004} to prove the convexity of the surface profile of Stokes waves of greatest height. In the recent works \cite{Con2012} and \cite{Lyo2014}, the uniform limiting process was effectively used to show that particles travelling on the streamlines of extremes Stokes waves, in finite and infinite depth, do not form closed trajectories.

Extending the methods developed in \cite{Con2012} and \cite{Lyo2014}, we demonstrate the aforementioned qualitative behaviour of the pressure in an extreme Stokes wave in fluid body of finite depth; see \cite{Hen2008, Hen2011, Lyo2015} for recent results concerning the pressure in deep-water Stokes waves. Pressure measurements within a water body are of significant importance both experimentally and theoretically. In many respects the pressure determines several physical features of the fluid body, for example governing the trajectories followed by individual fluid particles, see \cite{Con2006,Con2012, CE2007, Hen2006, Hen2008, Lyo2014} for some recent results on particle trajectories in regular and extreme Stokes waves. From a practical point of view, wave measurements can be extremely difficult to execute \cite{HKS2008,Ume2005} and a widely used method of obtaining information is through pressure data \cite{OVDH2012,THYLL2005}.  These pressure measurements convey information about the surface profile of the water wave by means of the pressure transfer function c.f. \cite{Con2012a, CEH2011, ES2008, CC2013, Hen2013} for some recent results in this regard. From an experimental and commercial perspective, information about the velocity and pressure beneath surface waves are crucial to the design of cost effective off-shore structures \cite{DKD1992}.

In Section \ref{Sec2} we introduce the free boundary problem for extreme Stokes waves in the moving frame, that is to say the frame which moves with wave phase speed of the two-dimensional flow. Section \ref{Sec3} examines the stream function and velocity potential for the flow, which follow from the incompressibility and irrotationality of the flow, and introduce the hodograph transform defined in terms of these functions. The free boundary problem is reformulated as a fixed boundary problem on the conformal image of the fluid domain under the hodograph transform. In Section \ref{Sec4} we prove the main theorem of this paper. Theorem \ref{s4thm1} states that the fluid pressure is strictly decreasing as a function of the horizontal coordinate $x$ between a crest-line and subsequent trough-line, except along these lines themselves, where the pressure is stationary with respect to this $x$-coordinate. Additionally it is  proven that the pressure is a strictly decreasing function of the vertical coordinate $y$, that is the pressure increases with the fluid depth.

\section{Physical Considerations}\label{Sec2}
\subsection{The Free Boundary Problem}\label{Sec2.1}
There are several structural features common to all Stokes waves, which are crucial for the analysis of the flow. Stokes waves are steady, irrotational and periodic wave trains, moving with fixed wave speed $c$ relative to the physical frame with coordinates $(X,Y,Z)$. In a Stokes wave of greatest height, the free surface profile $\eta:=\eta(X-ct)$ is symmetric about any crest-line and moreover is strictly decreasing between wave-crest and wave-trough cf. \cite{Tol1996}. In general for Stokes waves, the horizontal fluid  velocity $u$ and pressure $P$ are periodic and symmetric about the crest-line while the vertical velocity $v$ is periodic and antisymmetric about the crest-line cf. \cite{Con2011}. Without loss of generality it will be assumed that the wave has a fixed wavelength $\lambda =2\pi$, defined as the distance between two successive wave crests (or troughs).  Stokes waves are characterised as two-dimensional flows in a vertical plane where the effects of surface tension are negligible, and which move through an incompressible fluid body.

As with regular Stokes waves, the wave of greatest height is a steady wave form and accordingly it propagates with fixed wave speed $c$. As such, we may reformulate the free boundary problem in the moving frame, which has coordinates $(x,y,z)$, and is related to the physical frame coordinates by the transformation
\begin{equation}\label{s2eq1}
    x = X-ct,\quad y=Y,\quad z=Z.
\end{equation}
The governing equations in the moving frame are given by Euler's equation
\begin{equation}\label{s2eq2}
\begin{split}
\left.
\begin{split}
    &(u-c)u_x + +vu_y=-P_x\\
    &(u-c)v_x + vv_y = -P_y-g,
\end{split}
\right.
\end{split}
\end{equation}
where $u:=u(x,y)$, $v:=v(x,y)$, $P:=P(x,y)$ and $\eta:=\eta(x,y)$ and the fluid domain is given by the closure of the set $\Omega = \left\{(x,y):x\in\RR,y\in(-d,\eta(x))\right\}$. The reformulation of the Euler equations with respect to the moving frame coordinates is that the system \eqref{s2eq2} depends on time implicity through the $x$-variable. Thus the velocity field and pressure depend on time implicity and are of the form $(u,v):=(u(x,y),v(x,y))$ and $P:=P(x,y)$. The incompressibility and irrotationality conditions are expressed via
\begin{equation}\label{s2eq3}
    u_x+v_y = 0\quad\text{and}\quad u_y-v_x = 0
\end{equation}
in $\Omega$. The dynamic and kinematic boundary conditions are given by
\begin{equation}\label{s2eq4}
\begin{split}
&P=P_0\quad\text{and}\quad v= (u-c)\eta^{\prime}\qquad\text{on }y=\eta\\
&v= 0\qquad\text{on }y=-d.
\end{split}
\end{equation}
The system of equations \eqref{s2eq2}-\eqref{s2eq4} constitute the free boundary problem in the moving frame.

The incompressibility and irrotationality of the fluid flow, as per equation \eqref{s2eq3}, ensure that both components of the fluid velocity are harmonic in the fluid domain, namely
\begin{equation}\label{s2eq5}
\Delta u = 0\quad\text{and}\quad \Delta v = 0\qquad\text{for }(x,y)\in \Omega.
\end{equation}
The dynamic boundary conditions in equation \eqref{s2eq4} reflect the fact that the only pressure exerted on the fluid particles there is the constant atmospheric pressure $P_0$. Meanwhile the first kinematic boundary condition coincides with the fact particles on the free surface remain there while the second kinematic boundary condition reflects the impermeability the flat bed.

\subsection{Stagnation Points and Mean Currents}
In the case of regular Stokes waves, the horizontal velocity component in the physical frame of an individual fluid particle is always less than the wave speed, which yields the strict inequality
\begin{equation}\label{s2eq6}
    u(x,y)-c<0\quad\text{for all }(x,y)\in\overline{\Omega}.
\end{equation}
in the moving frame. Hence the horizontal component of a particle's velocity is in the negative $x$-direction within the moving frame. The presence of stagnation points at the wave crest in a Stokes wave of greatest height means the relation \eqref{s2eq6} is no longer strict throughout the fluid body the, but rather we find
\begin{equation}\label{s2eq7}
    u(x,y)-c\leq 0\quad\text{for all }(x,y)\in\overline{\Omega},
\end{equation}
with equality achieved only at the wave-crest. The presence of stagnation points on the free surface will ensure that the functions $u$, $v$, $\eta$ and $P$ are no longer real analytic, but merely continuous, in any neighbourhood of these points. However these functions are real analytic throughout the remainder of the fluid domain in the wave of greatest height, and indeed are also real analytic throughout the entire fluid domain of regular Stokes waves. This regularity of the velocity, pressure and surface profile in a smooth Stokes waves will be exploited to deduce several qualitative features of these functions in the wave of greatest height, by means of the uniform limiting process.

In the case of regular and extreme Stokes waves over finite depth, we define the mean horizontal current on the sea bed as
\begin{equation}\label{s2eq8}
    \kappa = \frac{1}{2\pi}\int_{0}^{2\pi} u(x,-d)dx < c,
\end{equation}
with the latter inequality a result of \eqref{s2eq7}, having imposed the condition that equality is achieved only at the wave crest. In addition the irrotationality of the fluid flow also ensures that at any fixed depth $y=y_0$ below the free surface we also find
\begin{equation}\label{s2eq9}
    \kappa = \frac{1}{2\pi}\int_{0}^{2\pi}u(x,y_0)dx < c,
\end{equation}
the last inequality again assured by the fact there are no stagnation points beneath the free surface. Thus we find the average horizontal current is fixed for all depths below the free surface of the wave of greatest height, and in what follows we will assume $\kappa = 0$.

\section{The Hodograph Transform}\label{Sec3}

The incompressibility of the fluid flow provides for the existence of streamlines in the fluid domain, with each streamline composed of the same particles at all times. This foliation of the fluid domain is defined by the level sets of the stream function $\psi:=\psi(x,y)$, which is defined in terms of the velocity field as follows:
\begin{equation}\label{s3eq2}
\psi_x = -v\quad\text{and}\quad\psi_y= u-c\qquad\text{for }(x,y)\in\Omega.
\end{equation}
The incompressibility condition applied to equation \eqref{s3eq2} also leads us to conclude that the stream function is harmonic throughout the fluid domain $\Omega$ and as such we must have $\Delta\psi = 0$ therein. Integrating the relations \eqref{s3eq2} we deduce from the symmetry properties and periodicity of the velocity field that the stream function itself is periodic in the horizontal direction.

Analogously the irrotationality of the fluid flow provides for the existence of a  velocity potential $\phi:=\phi(x,y)$, which is defined in terms of the fluid velocity as follows:
\begin{equation}\label{s3eq3}
\left.
\begin{aligned}
    &\phi_x = u-c\\
    &\phi_y = v\\
\end{aligned}
\right\}\quad\text{for }(x,y)\in \Omega.
\end{equation}
Once again the irrotationality condition applied to \eqref{s3eq3} also ensures the velocity potential $\phi$ is harmonic throughout the fluid domain $\Omega$. Furthermore upon integrating \eqref{s3eq3} and imposing the symmetries and periodicity of the fluid velocity we deduce that the function $\phi(x,y)+cx$ is also periodic with respect to $x$.

Integrating the Euler equation \eqref{s2eq1}, it is readily demonstrated that the expression
\begin{equation}\label{s3eq4}
\frac{(u-c)^2+v^2}{2}+g(y-d) + P
\end{equation}
is constant throughout the fluid domain, which constitutes Bernoulli's law. Utilising this it is possible to reformulate the free boundary problem in equations \eqref{s2eq1}--\eqref{s2eq4} in terms of the stream function $\psi$ and the free surface $\eta$ according to:
\begin{equation}\label{s3eq5}
\begin{split}
    &\Delta\psi = 0\quad \text{on } (x,y)\in \Omega\\
&\left.
\begin{split}
    \abs{\nabla\psi}^2 + 2g(y+d) = Q\\
    \psi = 0
\end{split}
\right\}\quad \text{ on } y=\eta(x)\\
&\psi=m\quad \text{on } y=-d,
\end{split}
\end{equation}
with $m$ the relative mass flux and $Q$ the hydraulic head.

\subsection{The Conformal Map of the Fluid Domain}
Having constructed a pair of harmonic conjugate functions, namely $\phi$ and $\psi$, we are now in a position to use these maps to transform the free boundary domain to a fixed boundary domain under the conformal map $\mathcal{H}: \Omega\to\hat{\Omega}$ where we define $(q,p)\in\cl\hat{\Omega}$ as follows:
\begin{equation}\label{s3eq6}
\begin{split}
    &q=-\phi(x,y)\\
    &p=-\psi(x,y).
\end{split}
\end{equation}
It should be noted that the mapping in equation \eqref{s3eq6} is conformal almost every where in the fluid body except at the wave-crest where the velocity field is no longer real analytic.

In what follows we shall apply maximum principles to the velocity field and pressure in the conformal domain $\hat{\Omega}$. However owing to the periodicity of these functions it is only necessary that they be examined over one wavelength, namely between $x=-\pi$ and $x=\pi$ in the free boundary domain. As such the interior domain may be effectively represented by the two disjoint regions
\begin{equation}\label{s3eq7}
\begin{split}
    &\Omega_+ = \left\{(x,y):x\in(0,\pi),y\in(-d,\eta(x))\right\}\\
    &\Omega_- = \left\{(x,y):x\in(-\pi,0),y\in(-d,\eta(x))\right\},
\end{split}
\end{equation}
along with their lateral edges
\begin{equation}
    \left\{x=0:y\in[-d,\eta(0)]\right\}\quad\text{and}\quad\left\{x=\pm\pi:y\in[-d,\eta(\pm\pi)]\right\}
\end{equation}
corresponding to the closure of the
crest-line and trough-lines respectively.
Meanwhile the free surface is divided as follows:
\begin{equation}\label{s3eq8}
\begin{split}
    &S_+ = \left\{(x,y): x\in(0,\pi),y=\eta(x)\right\}\\
    &S_- = \left\{(x,y): x\in(-\pi,0),y=\eta(x)\right\},
\end{split}
\end{equation}
while the lower boundary of the fluid domain is given by the sets
\begin{equation}\label{s3eq9}
\begin{split}
    &B_+ = \left\{(x,y): x\in(0,\pi),y=-d\right\}\\
    &B_- = \left\{(x,y): x\in(-\pi,0),y=-d\right\},
\end{split}
\end{equation}
corresponding to the impermeable flat bed.

Under the conformal map $\mathcal{H}$ the interior regions ${\Omega}_{\pm}$ transform to the interior regions $\hat{\Omega}_{\pm}$ given by
\begin{equation}\label{s3eq10}
\begin{split}
    &\hat{\Omega}_+ = \left\{(q,p):q\in(0,c\pi),p\in(-m,0)\right\}\\
    &\hat{\Omega}_- = \left\{(q,p):q\in(-c\pi,0),p\in(-m,0)\right\}.
\end{split}
\end{equation}
Meanwhile the images of the free surface and the flat bed are
\begin{equation}\label{s3eq11}
\begin{split}
    &\hat{S}_+ = \left\{(q,p): q\in(0,c\pi),p=0\right\}\\
    &\hat{S}_- = \left\{(q,p): q\in(-c\pi,0),p=0\right\}
\end{split}
\end{equation}
and
\begin{equation}\label{s3eq12}
\begin{split}
    &\hat{B}_+ = \left\{(q,p): q\in(0,c\pi),p=-m\right\}\\
    &\hat{B}_- = \left\{(q,p): q\in(-c\pi,0),p=-m\right\}
\end{split}
\end{equation}
respectively. The remaining boundary lines are given by the image of the closed crest-line, that is $\left\{q=0:p\in[-m,0]\right\}$ and the images of the closed trough-lines, namely $\left\{q=\pm c\pi:p\in[-m,0]\right\}$.

\begin{figure}
\centering
\includegraphics{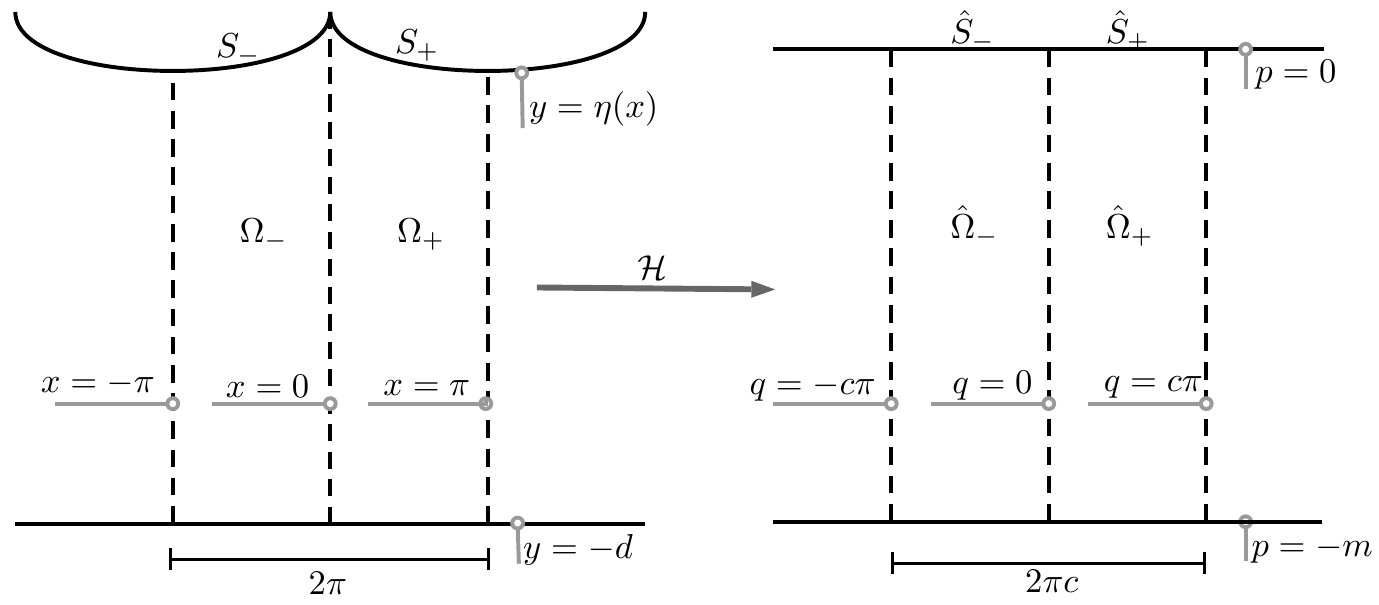}
\caption{The hodograph transform between the free boundary domain ${\Omega}$ and its conformal image $\hat{\Omega}$.}
\label{hodograph}
\end{figure}

Lastly we introduce the height function
\begin{equation}\label{s3eq13}
    h(q,p)=y+d,
\end{equation}
where it is immediately clear that this function is harmonic throughout $\Omega$. Since the mapping given by equation \eqref{s3eq8} is conformal every where within $\Omega$, it follows that $h(q,p)$ is also analytic everywhere within the conformal domain $\hat{\Omega}$. The free boundary problem given by equation \eqref{s3eq5} may be reformulated in terms of the harmonic function $h$ in the conformal domain, thus yielding a fixed boundary problem given by:
\begin{equation}\label{s3eq14}
\begin{split}
&\Delta_{q,p}h=0\quad \text{for all }(q,p)\in\hat{\Omega}\\
&h=0\qquad\text{on }p=-m\\
&2(Q-gh)(h_q^2+h_p^2)\quad \text{on }p=0.
\end{split}
\end{equation}
The second boundary condition (on $p=0$) follows from Bernoulli's Law and the coordinate transformations
\begin{equation}\label{s3eq15}
\begin{split}
&\left.
\begin{split}
    &\partial_x = (c-u)\partial_q+v\partial_p\\
    &\partial_y = -v\partial_q + (c-u)\partial_p
\end{split}
\right\}\qquad
\left.
\begin{split}
    &\partial_q = h_p\partial_x + h_q\partial_y\\
    &\partial_p= -h_q\partial_x + h_p\partial_y
\end{split}
\right\}\\
&h_q = -\frac{v}{(c-u)^2+v^2}\qquad h_p=\frac{c-u}{(c-u)^2+v^2},
\end{split}
\end{equation}
which follow directly from the conformal transformation in equation \eqref{s3eq6} and equations \eqref{s3eq2}--\eqref{s3eq3} which define the harmonic conjugates $\phi$ and $\psi$. In a regular Stokes wave the reformulation of the free boundary problem as per equation \eqref{s3eq14} is valid throughout the fluid domain including the free boundary. In the case of extreme Stokes waves, the conformal change of variables given by equation \eqref{s3eq6} may be implemented in the interior, and by Caratheodory's theorem c.f. \cite{Pom2002} may be continuously extended to the closed fluid domain. Nevertheless the coordinate transformation is not differentiable at the wave crest and as such the reformulation of the boundary problem fails at the point $(q,p)=(0,0)$ along the boundary $\partial\hat{\Omega}$.

The map given by $\gamma:\hat{\Omega}_+\to\Omega_+$ with $\gamma(\xi) = x+iy$ and $\xi=q+ip$ is analytic in the interior of $\hat{\Omega}_+$. Additionally the function has analytic continuation to a neighbourhood of any point on the boundary $\partial{\hat{\Omega}_+}.$ Defining the function $h_0(q) = h(q,0)$, there exists a sequence of Stokes waves $\left\{h_n(q):n\geq1\right\}$ which converges weakly to $h_0$ in the Sobolev space $W_{\mathrm{per}}^{1,k}(\RR)$, where $1< k< 3$ cf. \cite{BT2003}. Privalov's theorem ensures the  sequence $x+iy_n = \gamma_n$ for $n\geq1$ approaches the extreme Stokes wave $h_0(q)$. The uniform limiting process is the means by which the sequence of regular almost extreme Stokes waves $\left\{h_n:n\geq 1\right\}$ allows us to formulate maximum principle in the conformal domain for the functions $u$, $v$ and $P$ and by taking the limit of the sequence, extend our conclusions to the extreme Stokes wave $h_0$.

\section{The Pressure Distribution}\label{Sec4}
In this section we will prove the main result of this paper which is the following:
\begin{theorem}\label{s4thm1}
In the fluid domain $\overline{\Omega}$ the pressure distribution function possesses the following features
\begin{enumerate}
\item The pressure strictly decreases between the crest-line and trough-line and as such we have $P_x<0$, except on the crest-line and on the trough-lines themselves where we find $P_x=0$.
\item The pressure strictly increases as we descend into the fluid domain and as such $P_y<0$
      throughout $\overline{\Omega}$.
\end{enumerate}
\end{theorem}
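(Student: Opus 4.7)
My plan is to reduce the statement, via the hodograph transform of Section~\ref{Sec3}, to the analysis of harmonic auxiliary functions on the conformal rectangle $\hat{\Omega}_+ = (0,c\pi)\times(-m,0)$, and to combine the strong maximum principle and Hopf's lemma with the Amick--Fraenkel--Toland uniform limiting process described at the end of Section~\ref{Sec3}. The symmetry portions of the theorem are immediate consequences of the evenness in $x$ (about both crest-lines and trough-lines) of the functions $u$, $P$, $\eta$ and the oddness of $v$: indeed $P_x \equiv 0$ along every such vertical segment. On the flat bed one has $v \equiv v_x \equiv 0$, and Euler's vertical equation collapses to $P_y = -g < 0$ on $B_\pm$.

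The central computation for the monotonicity of $P$ in $x$ is that combining Euler's first equation \eqref{s2eq2} with the chain-rule formulas in \eqref{s3eq15} and irrotationality (so that the mixed terms cancel) produces the clean identity
\begin{equation*}
P_x \;=\; W\,u_q,\qquad\text{where } W = (u-c)^2 + v^2 > 0,
\end{equation*}
throughout $\hat{\Omega}_+$. Since $u$ is harmonic on the rectangle, so is $u_q$, and claim~(1) reduces to showing $u_q < 0$ strictly in the interior. The boundary data for $u_q$ are as follows: on the lateral sides $q = 0$ and $q = c\pi$ the symmetry $u(-q,p)=u(q,p)$ forces $u_q \equiv 0$; on the bed $p = -m$ the identity $h \equiv 0$ gives $h_q \equiv 0$, so the relation $u-c = -h_p/(h_p^2+h_q^2)$ simplifies to $u = c - h_p^{-1}$ and hence $u_q = h_{pq}/h_p^2$; a maximum-principle argument applied to the harmonic function $h_q$---which satisfies $h_q \le 0$ throughout $\hat{\Omega}_+$ because $v = -h_q\,W > 0$ there---followed by Hopf's lemma on $p = -m$ yields $h_{pq} < 0$, so $u_q < 0$ on the bed; finally, on the surface side $p = 0$ the superharmonicity of $P-P_0$ in $\Omega_+$ together with Hopf's lemma at a smooth point of $S_+$ gives $P_y < 0$ on $S_+$, and then the tangential identity $P_x + P_y\,\eta' = 0$ (since $P\equiv P_0$ on $S_+$) combined with $\eta' < 0$ forces $P_x < 0$, hence $u_q = P_x/W < 0$, on the open surface. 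The strong maximum principle now forces $u_q < 0$ strictly inside $\hat{\Omega}_+$, so $P_x = Wu_q < 0$. An entirely analogous derivation gives $P_y = Wu_p - g$ with $u_p = v_q$ (the Cauchy--Riemann relations for the holomorphic combination $(c-u)+iv = H^{-1}$, where $H = h_p + ih_q$), and the same pattern of boundary analysis, combined with Hopf's lemma and the strong maximum principle applied to an appropriate harmonic combination, yields $P_y < 0$ in the interior.

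The principal obstacle is the loss of both conformality and boundary regularity at the stagnation point $(q,p) = (0,0)$ of the extreme Stokes wave: $h_0$ lies only in $W^{1,k}_{\mathrm{per}}$, $H$ vanishes at this corner, and the closed conformal rectangle is not regular enough there for the preceding maximum-principle machinery to apply up to the boundary. This is circumvented precisely by the uniform limiting process: the strict inequalities above are established on each regular approximant $h_n$, where the closed-domain analysis is standard, and the convergence $h_n \to h_0$ together with Privalov's theorem propagates $P_x \le 0$ and $P_y \le 0$ to the extreme wave; the strict inequalities in the interior are then recovered by the strong maximum principle on compact subsets of $\overline{\Omega}$ avoiding the stagnation corner, in the spirit of the particle-trajectory arguments in \cite{Con2012} and \cite{Lyo2014}.
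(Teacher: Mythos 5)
Your Part 1 is essentially sound and in fact somewhat more self-contained than the paper's argument. The identity $P_x = \bigl[(c-u)^2+v^2\bigr]u_q$ is exactly the paper's \eqref{s4eq10} (since $h_q^2+h_p^2 = 1/W$), and where the paper simply cites $u_q<0$ from \cite{Con2012}, you propose to derive it by a maximum principle for the harmonic function $u_q$ on $\hat{\Omega}_+$, with boundary data obtained from symmetry on the lateral sides, Hopf's lemma applied to $h_q$ on the bed, and the superharmonicity of $P$ plus the tangential relation $P_x+P_y\eta'=0$ on the surface. Each of these steps checks out (modulo the excision of the stagnation corner and the uniform limiting process, which you correctly flag), so for claim (1) you have a legitimate alternative route whose main cost is that it front-loads the Hopf argument for $P_y$ on $S_+$, which the paper defers to Part 2.

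Part 2, however, contains a genuine gap. The identity $P_y = Wu_p - g$ is correct, but it does \emph{not} reduce to ``the same pattern'': $g/W$ is neither constant nor harmonic, so there is no maximum principle that converts boundary negativity of $P_y$ into interior negativity, and $P_y$ itself is not harmonic (indeed $\Delta P<0$). The phrase ``an appropriate harmonic combination'' is precisely the missing content. The paper's proof hinges on the specific auxiliary function $f=(c-u)v-gx$, on the nontrivial free-surface inequality $\frac{d}{dx}f(x,\eta(x))\le 0$ imported from \cite{CS2010} (and its extension to the extreme wave), and on a two-stage maximum-principle argument: first $f<0$ in $\Omega_+$ (which already gives $P_y<0$ on the crest-line via $P_y(0,y)=f_x(0,y)$), and then $f_q<0$ in $\hat{\Omega}_+$, whose boundary verification again requires the surface inequality in the form \eqref{s4eq21}. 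Only after converting $f_q<0$ into the pointwise bound $v_x< g(c-u)/W$ and combining it algebraically with $u_q<0$ does one obtain $P_y<0$ in the interior, as in \eqref{s4eq22}--\eqref{s4eq24}. None of this is recoverable from boundary data for $P_y$ alone, and your closing remark that interior strictness is ``recovered by the strong maximum principle'' cannot apply to $P_y$ directly. To complete the argument you would need to exhibit the harmonic function $f$ (or an equivalent), establish the Constantin--Strauss surface inequality for the extreme wave, and carry out the $f$, $f_q$ analysis.
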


\begin{proof}
\textsc{Part 1.}
Given that $v$ is $2\pi$-periodic in $x$ and anti-symmetric about the crest-line $\left\{x=0:y\in[-d,\eta(0)]\right\}$ (properties inherited from the almost extreme Stokes waves via the uniform limiting process),  it follows that
\begin{equation}\label{s4eq1}
\begin{split}
   v(0,y) = v_y(0,y)= 0 & \text{ for } -d<y<\eta(0)  \\
    v(0,y) = v_{y}(\pi,y)=0 & \text{ for }-d<y<\eta(\pi).
\end{split}
\end{equation}
Moreover, owing to the incompressibility condition in the interior of $\Omega$ cf. equation \eqref{s2eq3}, it follows that
\begin{equation}\label{s4eq2}
\begin{split}
   u_x(0,y) =0 & \text{ for } -d<y<\eta(0)  \\
   u_{x}(\pi,y) =0 & \text{ for }-d<y<\eta(\pi).
\end{split}
\end{equation}
Meanwhile, the Euler equation \eqref{s2eq2} along with equations \eqref{s4eq1}--\eqref{s4eq2} yield
\begin{equation}\label{s4eq3}
  P_{x} = (c-u)u_{x} = 0 \quad \text{for } \left\{(x,y):x \in\{0,\pi\} \text{ and } y\in(-d,\eta(x))\right\}.
\end{equation}
This ensures $P$ is stationary with respect to the horizontal coordinate along the crest(trough)-line, from a point immediately beneath the wave-crest(trough) to points directly beneath lying immediately above the sea bed.

The velocity component $u(x,y)$ is continuous throughout $\overline{\Omega}$ cf. \cite{Con2012}. As such, about the wave crest $(0,\eta(0))$ we may write
\begin{equation}\label{s4eq4}
    u_x(0,\eta(0)) = \lim_{\epsilon\to0}\frac{u(\epsilon,\eta(0))}{\epsilon} = -\lim_{\epsilon\to0} \frac{u(-\epsilon,\eta(0))}{\epsilon} = 0,
\end{equation}
where the  symmetry of  $u$ about the crest-line $\left\{x=0:y\in[-d,\eta(0)]\right\}$ ensures the last inequality. Moreover anti-symmetry ensures $v$ vanishes at the wave-crest, and as such the Euler equation \eqref{s2eq2} yields
\begin{equation}\label{s4eq5}
    P_x = (c-u)u_x = 0\quad \text{at }(x,y) = (0,\eta(0))
\end{equation}
thus ensuring $P$ is stationary with respect to $x$ at the wave-crest also. Meanwhile, at the wave-trough $(\pi,\eta(\pi))$ the functions $u,$ $v$ and $P$ are differentiable and we may write
\begin{equation}\label{s4eq6}
    P_x = (c-u)u_x = (c-u)v_y = 0 \quad\text{at  }(x,y)  = (\pi,\eta(\pi)),
\end{equation}
having imposed $v(\pi,\eta(\pi)) = 0$ and equation \eqref{s4eq1}.

Additionally along the flat-bed $y=-d$ where $x\in\RR$, we have $v=0$ due to the impermeability of the sea-bed. Moreover the functions $u$, $v$ and $P$ are differentiable along the flat-bed, in which case equations \eqref{s2eq1}--\eqref{s2eq5} yield
\begin{equation}\label{s4eq7}
    P_x = (u-c)v_y.
\end{equation}
With $x\in(0,\pi)$ we have $v_y(x,-d) > 0$, cf. \cite{Con2012}, while $c>u(x,-d)$ and it follows that
\begin{equation}\label{s4eq8}
    P_x <0\quad \text{for }x\in(0,\pi)\ y=-d.
\end{equation}
Meanwhile periodicity and anti-symmetry of $v$ with respect to $x$, also ensure $v(0,-d) = v(\pi,-d) = 0$, in which case equations \eqref{s2eq3} and \eqref{s4eq1} allow us to write
\begin{equation}\label{s4eq9}
 P_x = (c-u)u_x = (u-c)v_y = 0\quad\text{for } x\in\{0,\pi\}\text{ and }y=-d,
\end{equation}
where we use the fact there are no stagnation points on the flat bed.

Within $\Omega_+$ and along $S_+$ it has been shown that $u_q<0$ cf. \cite{Con2012}, while equations \eqref{s2eq2} and \eqref{s3eq15} ensure
\begin{equation}\label{s4eq10}
 P_x = \frac{u_q}{h_q^2+h_p^2} \quad\text{in }\Omega_{+} \text{ and along } S_+.
\end{equation}
It follows that $P_x<0$ in $\overline{\Omega}_+$ except along the crest-line and trough-line where $P$ is stationary with respect to the  $x$-coordinate.

\textsc{Part 2}\indent Next we consider the function $f(x,y):=(c-u(x,y))v(x,y)-gx$ defined on $\overline{\Omega}$. It may be shown that
\begin{equation}\label{s4eq11}
\frac{d}{dx}f(x,\eta(x)) \leq 0 \quad \text{for }x\in[0,\pi],
\end{equation}
see reference \cite{CS2010} for a derivation of this inequality in regular Stokes waves, where the results of \cite{Ami1987,PT2004, Con2012, Lyo2015} also ensure this derivation may be extended to the case of extreme Stokes waves.
Integrating \eqref{s4eq11} it follows at once that $f<0$ on $S_+$, while antisymmetry and $2\pi$-periodicity ensure $f=0$ along $\{x=0:y\in[-d,\eta(0)]\}$ and $f=-g\pi$ along $\{x=\pi:y\in[-d,\eta(\pi)]\}$. Since the sea-bed is impermeable and $g>0$ we also deduce $f<0$ on $B_+$. Moreover, since the hodograph transform maps the boundary to the boundary, namely $\mathcal{H}:\partial\Omega\to\partial\hat{\Omega}_+$ we conclude $f=0$ on $\left\{q=0:p\in[-m,0]\right\}$ while $f<0$ elsewhere along the boundary $\partial\hat{\Omega}_+$.

Since the velocity field $(u,v)$ is continuous on $\overline{\Omega}$ c.f. \cite{Con2012}, the function $f$ must be also continuous in $\overline{\Omega}_+$.  Direct calculation also reveals $f$ to be harmonic in $\Omega$, and since the mapping $\mathcal{H}:\Omega\to\hat{\Omega}$ is conformal in the interior, $f$ is also harmonic in $\hat{\Omega}_+$. The previous paragraph has also shown $f\leq0$ on $\partial\hat{\Omega}_+$, in which case the weak maximum principle requires $f$ achieve its maximum value on $\partial\Omega_+$, and thus $f\leq0$ in $\hat{\Omega}_+$. Consequently we must also have $f\leq0$ in $\Omega$.

In fact it may be shown that $f<0$ in $\Omega_+.$ Let us consider the excised conformal domain $\hat{\Omega}_{+}^{\varepsilon}$, illustrated in Figure \ref{excisedregion}, where a quarter disc of radius $\varepsilon$ centered at $(q,p)=(0,0)$ is removed. Having removed the stagnation point $(0,0)\in\partial\hat{\Omega}_{+}$, the functions $u$, $v$ and $f$, along with the boundary $\partial\hat{\Omega}_{+}^{\varepsilon}$, possess the necessary regularity to impose the strong maximum principle. If there exists an interior point $(q_0,p_0)\in\hat{\Omega}_{+}^{\varepsilon}$ such that $f(q_0,p_0)=0$, then $f$ has achieved its maximum value in the interior $\hat{\Omega}_{+}^{\varepsilon}$, which requires $f$ be constant.  However $f$ is not constant and so we conclude $f<0$ in $\hat{\Omega}_{+}^{\varepsilon}.$   Moreover since the hodograph transform $\mathcal{H}:{\Omega}_+^{\varepsilon}\to{\hat{\Omega}}_{+}^{\varepsilon}$ is conformal, it follows that $f<0$ in $\Omega_{+}^{\varepsilon}$. The uniform limiting process then ensures $f<0$ in $\Omega_+$.
\begin{figure}
\centering
\includegraphics{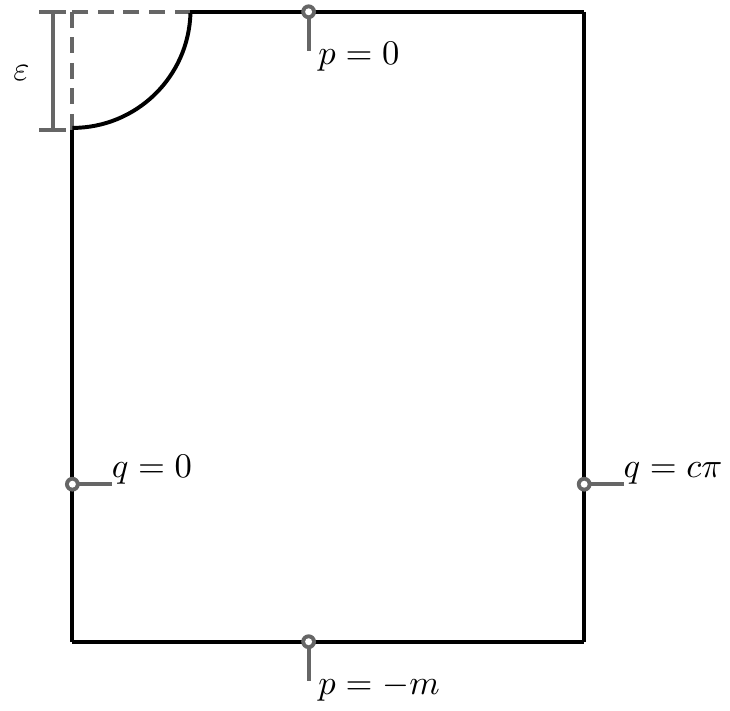}
\caption{The excised conformal domain $\hat{\Omega}_{+}^{\varepsilon}$.}\label{excisedregion}
\end{figure}
Since $f=0$ along the crest line and $f<0$ in  the interior $\Omega_+$,we deduce that $f_x(0,y)<0$ for $y\in(-d,\eta(0))$. Along  the crest-line, the antisymmetry of $v$ allows us to reformulate equation \eqref{s2eq1} according to
\begin{equation}\label{s4eq12}
    P_y(0,y)=f_x(0,y)<0\quad\text{for }y\in(-d,\eta(0)).
\end{equation}
With $v=0$ along the trough-line and $v>0$ in the interior $\Omega_+$ cf. \cite{Con2012}, it follows that $v_x(\pi,y)<0$ for $-d<y<\eta(\pi)$ and consequently the irrotationality condition given by equation \eqref{s2eq3} ensures $u_y(\pi,y)<0$ for $-d<y <\eta(\pi)$. Thus the second member of the Euler equation \eqref{s2eq2} yields
\begin{equation}\label{s4eq13}
    P_y(\pi,y) = -g+ (c-u(\pi,y))u_y(\pi,y) < 0\quad\text{for }y\in\left(-d,\eta(\pi)\right),
\end{equation}
since $c-u>0$ everywhere except the stagnation point.

It may be deduced from equation \eqref{s2eq2} that
\begin{equation}\label{s4eq14}
  \Delta P = -2(u_x^2+u_y^2)<0 \quad\text{in } \Omega,
\end{equation}
in which case $P$ is super-harmonic and consequently
\[\min_{\overline{\Omega}}P = \min_{\partial\Omega}P.\]
The second kinematic boundary condition in equation \eqref{s2eq4} ensures $v(x,-d)= v_x(x,-d) = 0$ for $x\in\RR$  which combined with equation \eqref{s2eq2} yields
\begin{equation}\label{s4eq15}
    P_y(x,-d) = -g<0 \quad \text{for }x\in\RR.
\end{equation}
Hence the minimum of $P$ must be achieved along the free surface, and since $P(x,\eta(x)) = P_0$ for all $x\in\RR$, it follows that $P$ attains its minimum all along the free surface and as such Hopf's maximum principle ensures
 \begin{equation}\label{s4eq16}
   P_y(x,\eta(x))<0\quad\text{for }x\in\RR.
 \end{equation}
At this point we have shown $P$ is strictly increasing with depth at any point along the boundary $\partial\Omega_+$.

However the coordinate transformation in equation \eqref{s3eq15} yields
\begin{equation}\label{s4eq17}
f_q = v_x-\frac{g(c-u)}{(c-u)^2 + v^2}\quad \text{in } \hat{\Omega}_+,
\end{equation}
which combined with the impermeability of the flat bed ensures $v=v_x=0$ along $\hat{B}_+$ ensures
\begin{equation}\label{s4eq18}
    f_q = -\frac{g}{c-u}<0 \quad\text{on }\hat{B}_+,
\end{equation}
since $c-u>0$ in the conformal domain except at  $(q,p)=(0,0)$. Along the crest-line  we have $v=0$ which combined with the Euler equation yields
\begin{equation}\label{s4eq19}
    f_q(0,p) = \frac{P_y}{c-u}=-\frac{g}{c-u}<0\quad\text{for }\text{ and }-m\leq p< 0,
\end{equation}
while along the trough-line we also find $v=0$, which yields
\begin{equation}\label{s4eq20}
    f_q(c\pi,p) = \frac{P_y}{c-u}=-\frac{g}{c-u}<0\quad\text{for }\text{ and }-m\leq p\leq 0,
\end{equation}
having used $c-u>0$ in the closure of $\hat{\Omega}_+$, except at the stagnation point corresponding to $(q,p)=(0,0)$. Along the free surface segment $\left\{y=\eta(x)\colon 0<x\leq \pi\right\}$, whose image is $\{p=0\colon 0<q\leq c\pi]\}$ under the hodograph transform, we find
\begin{equation}\label{s4eq21}
    f_q = \frac{\frac{d}{dx}f(x,\eta(x))}{(c-u)(1+\eta^{\prime\,2})}\leq0 ,
\end{equation}
having used relation \eqref{s4eq11} to obtain the last inequality. Thus we have shown $f_{q}\leq0$ along $\partial\hat{\Omega}_{+}^{\varepsilon}$.

 Since $f$ is harmonic in $\Omega_+$ and the hodograph transform $\mathcal{H}$ is conformal within $\Omega_+$, it follows that $f$ and consequently $f_q$ are also harmonic on the open domain $\hat{\Omega}_{+}^{\varepsilon}$. As with our previous considerations regarding $f$, maximum principles applied to $f_q$  ensure $f_q<0$ within the domain $\hat{\Omega}_+^{\varepsilon}$, having used the result of the previous paragraph. The uniform limiting process then ensures $f_q<0$ throughout the open rectangle $\hat{\Omega}_+$. Meanwhile equation \eqref{s4eq11} now ensures
\begin{equation}\label{s4eq22}
    v_x<\frac{g(c-u)}{(c-u)^2+v^2} \quad \text{in } {\Omega}_+.
\end{equation}
Moreover it is also known that $u_q<0$ in $\Omega_+$ c.f. \cite{Con2012}, from which we deduce
\begin{equation}\label{s4eq23}
    v_y>\frac{v}{u-c}v_x\quad\text{in }\Omega_+,
\end{equation}
having used the incompressibility and irrotationality conditions along with relation \eqref{s2eq7}.
Replacing inequality \eqref{s4eq17} in the second member of the Euler equation \eqref{s2eq1}, we conclude
\begin{equation}\label{s4eq24}
    P_y = -g +(c-u)v_x-vv_y<-g+\frac{v_x}{c-u}\left[(c-u)^2+v^2\right]<0\quad\text{in }\Omega_+,
\end{equation}
the last inequality following from relation \eqref{s4eq15}.
Thus the pressure strictly increases with depth at any point of the fluid domain $\overline{\Omega}_+$, while symmetry and periodicity ensure that $P_y<0$ throughout $\overline{\Omega}$.
\end{proof}

\end{document}